 \newtheorem{Theorem}{Theorem}[section]
 \newtheorem{lemma}[Theorem]{Lemma}
 \theoremstyle{definition}
 \newtheorem{definition}[Theorem]{Definition}
 \theoremstyle{remark}
 \numberwithin{equation}{section}
\renewcommand{\Re}{\operatorname{Re}}
\begin{document}

\title[Zero-two law for cosine families]
 {Zero-two law for cosine families}


\author[Schwenninger]{Felix Schwenninger}
\address{%
Department of Applied Mathematics, \\ University of Twente, P.O. Box 217, \\ 7500 AE Enschede, The Netherlands}

\email{f.l.schwenninger@utwente.nl}

\thanks{The first named author has been supported by the Netherlands Organisation for Scientific Research (NWO), grant no. 613.001.004.}
\author[Zwart]{Hans Zwart}
\address{Department of Applied Mathematics, \\ University of Twente, P.O. Box 217, \\ 7500 AE Enschede, The Netherlands}
\email{h.j.zwart@utwente.nl}
\subjclass{Primary 47D09; Secondary 47D06}

\keywords{Cosine families, Semigroup of operators, Zero-two law}

\date{September 11, 2014}

\begin{abstract} 
For $\left(C(t)\right)_{t \geq 0}$ being a strongly continuous cosine family on a Banach space, we show that the estimate $\limsup_{t\to 0^{+}}\|C(t) - I\| <2$ implies that $C(t)$ converges to  $I$ in the operator norm. This implication has become known as the zero-two law. We further prove that the stronger assumption of $\sup_{t\geq0}\|C(t)-I\|<2$ yields that $C(t)=I$ for all $t\geq0$. Additionally, we give alternative proofs for similar results for $C_{0}$-semigroups.
\end{abstract}

\maketitle
\section{Introduction}

Let $\left( T(t) \right)_{t \geq 0}$ denote a strongly continuous semigroup on the Banach space $X$ with infinitesimal generator $A$. 
It is well-known that the inequality
\begin{equation}\label{eq0}
\limsup_{t\to0^{+}}\|T(t) - I\| <1,
\end{equation}
implies that the generator $A$ is a bounded operator, see e.g.\
\cite[Remark 3.1.4]{Staffans}. Or equivalently, that the semigroup is
uniformly continuous (at 0), i.e.,
\begin{equation}
\label{eq:1a}
\limsup_{t\to0^{+}}\|T(t)-I\|=0.
\end{equation}
This has become known as \textit{zero-one} law for
semigroups. Surprisingly, the same law holds for general semigroups on
semi-normed algebras, i.e., (\ref{eq0}) implies (\ref{eq:1a}), see
e.g.\ \cite{Esterle04}. For a nice overview and related
results, we refer the reader to \cite{ChEP14}.

In this paper we study the zero-two law for strongly continuous cosine families on a
Banach space, i.e.\ whether
\begin{equation}
	\limsup_{t\to0^{+}}\|C(t) - I\| <2 \quad\text{ implies that }\quad  \limsup_{t\to0^{+}}\|C(t) - I\| = 0.
\end{equation}
This implication is known if the Banach space is UMD, see
\cite[Corollary 4.2]{Fackler2013}, hence, in particular for Hilbert
spaces. On the other hand the $0-3/2$ law, i.e.
\begin{equation*}
	\limsup_{t\to0^{+}}\|C(t) - I\| <\frac{3}{2} \quad\text{ implies that }\quad  \limsup_{t\to0^{+}}\|C(t) - I\| = 0,
\end{equation*}
holds for cosine families on general Banach spaces as was proved by W.~
Arendt in \cite[Theorem 1.1 in Three Line
Proofs]{UlmerSeminare2012}. The result even holds without assuming
that the cosine family is strongly continuous. In the same work,
Arendt poses the question whether the zero-two law holds for cosine
families, \cite[Question 1.2 in Three Line
Proofs]{UlmerSeminare2012}. The following theorem answers this
question positively for strongly continuous cosine families. For its
proof and the definition of a cosine family we refer to Section
\ref{sec:2}.
\begin{Theorem}
\label{Tm:1.1}
Let $\left(C(t)\right)_{t\geq 0}$ be a strongly continuous cosine
family on the Banach space $X$. Then
\begin{equation}\label{eq:2}
  \limsup_{t\to 0^{+}}\|C(t)- I\|<2,
\end{equation}
implies that $\lim_{t\to 0^{+}}\|C(t)-I\|=0$.
\end{Theorem}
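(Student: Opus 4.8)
The plan is to deduce that the generator $A$ of the cosine family is bounded, which for cosine families is equivalent to norm-continuity at $0$ and hence to $\lim_{t\to0^{+}}\|C(t)-I\|=0$; so it suffices to prove $A\in B(X)$ under hypothesis (\ref{eq:2}). The whole argument runs on the algebraic identities contained in d'Alembert's equation $C(s+t)+C(s-t)=2C(s)C(t)$. Setting $s=t$ gives the double-angle formula $C(2t)=2C(t)^{2}-I$, equivalently
\begin{equation*}
C(2t)-I=2\bigl(C(t)-I\bigr)\bigl(C(t)+I\bigr)\qquad\text{and}\qquad\tfrac12\bigl(C(t)+I\bigr)=C(t/2)^{2},
\end{equation*}
and induction yields the Chebyshev relation $C(nt)=T_{n}(C(t))$. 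Writing $\phi(t)=\|C(t)-I\|$, I fix $\epsilon\in(r,2)$, with $r$ the left-hand side of (\ref{eq:2}), and $\delta_{0}>0$ so that $\phi(t)\le\epsilon$ on $(0,\delta_{0}]$.

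A first, elementary attempt proceeds purely in the norm. Since $\|\tfrac12(C(t)+I)-I\|=\tfrac12\phi(t)<1$, the operator $\tfrac12(C(t)+I)=C(t/2)^{2}$ is invertible with $\|(C(t)+I)^{-1}\|\le(2-\phi(t))^{-1}$, so the first identity rearranges to $C(t)-I=\tfrac12(C(2t)-I)(C(t)+I)^{-1}$ and gives the scalar recursion $\phi(t)\bigl(2-\phi(t)\bigr)\le\tfrac12\phi(2t)$. Passing to the limit superior as $t\to0^{+}$ forces $r\in\{0\}\cup[3/2,2)$, which is exactly Arendt's $0$--$3/2$ law; crucially, this norm recursion cannot do better, since the pair $\phi(t),\phi(2t)$ can remain self-consistently in the upper band $[3/2,2)$. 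Thus the real task is to rule out $r\in[3/2,2)$, and the obstruction is sharp and scalar: for $\gamma(t)=\cos(bt)$ one has $|\cos\pi-1|=2$, so $2$ is precisely the distance that a high frequency is forced to attain, and no purely metric estimate can exploit it.

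To capture the sharp constant I would pass to the commutative unital Banach algebra $\mathcal{A}\subseteq B(X)$ generated by $\{C(t):t\ge0\}$ (the family commutes by the functional equation). For a character $\chi$ of $\mathcal{A}$ the scalar function $\gamma_{\chi}(t)=\chi(C(t))$ again solves d'Alembert's equation with $\gamma_{\chi}(0)=1$; testing $\chi$ against the Bochner integrals $\int_{0}^{T}f(t)C(t)\,dt\in\mathcal{A}$ shows $\gamma_{\chi}\in L^{\infty}(0,T)$, and a bounded measurable solution of d'Alembert's equation is necessarily $\gamma_{\chi}(t)=\cos(b_{\chi}t)$ for some $b_{\chi}\in\mathbb{C}$. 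Because $|\gamma_{\chi}(t)-1|\le r(C(t)-I)\le\phi(t)\le\epsilon<2$ for every $t\in(0,\delta_{0}]$, the scalar analysis above -- a cosine of frequency $b_{\chi}$ cannot stay within distance $\epsilon<2$ of $1$ throughout $(0,\delta_{0}]$ unless $|b_{\chi}|\le\arccos(1-\epsilon)/\delta_{0}$ -- forces $\sup_{\chi}|b_{\chi}|<\infty$, i.e. $\sigma(A)$ is bounded.

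The step I expect to be the main obstacle is the final one: upgrading boundedness of $\sigma(A)$ to boundedness of $A$. This does not follow from the spectral radius alone -- in a general Banach algebra $r(C(t)-I)\to0$ never implies $\|C(t)-I\|\to0$, which is the precise reason the elementary recursion stalls at $3/2$ and the reason a genuinely new ingredient is needed. Here I would use the specific structure of cosine generators: the two-sided bound $\|C(t)\|\le Me^{\omega|t|}$ and the Laplace representation $\lambda(\lambda^{2}-A)^{-1}=\int_{0}^{\infty}e^{-\lambda t}C(t)\,dt$ give resolvent decay in a sector, and a Phragm\'en--Lindel\"of argument together with the Riesz projection $\tfrac1{2\pi i}\oint_{\Gamma}(\mu-A)^{-1}\,d\mu$ around the now bounded spectrum shows that there is no empty-spectrum summand, so this projection is the identity and $A=\tfrac1{2\pi i}\oint_{\Gamma}\mu(\mu-A)^{-1}\,d\mu$ is bounded. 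Making this last point fully rigorous for cosine families, and dealing with complex $b_{\chi}$ along the way, is where the real work of passing from $3/2$ to $2$ is concentrated; once $A\in B(X)$, the family $C(t)=\sum_{k\ge0}\tfrac{t^{2k}}{(2k)!}A^{k}$ is norm-analytic and $\lim_{t\to0^{+}}\|C(t)-I\|=0$ follows.
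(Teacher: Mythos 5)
Your overall strategy --- reduce the theorem to showing that the generator $A$ is bounded --- is the same as the paper's, and your diagnosis of where the difficulty sits is accurate: boundedness of $\sigma(A)$ does not imply boundedness of $A$, so a quantitative resolvent estimate at infinity is required. But that is precisely the step your proposal does not supply. The character argument (granting the measurability of $t\mapsto\chi(C(t))$, which is itself delicate because $t\mapsto C(t)$ is only strongly continuous, so the integrals $\int_0^T f(t)C(t)\,dt$ are not obviously Bochner integrals in the norm of the subalgebra, and $\chi$ cannot simply be pulled inside) yields at best that $\sigma(A)$ lies in a bounded set. Your proposed remedy --- ``a Phragm\'en--Lindel\"of argument together with the Riesz projection'' --- is not a proof: the Riesz projection around the bounded spectrum splits $X$ into a part on which $A$ is bounded and a part on which $A$ is closed with \emph{empty} spectrum, and nothing you have established rules out the second summand being nontrivial. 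To rule it out one needs $\|\mu R(\mu,A)\|$ to be uniformly bounded for all large $|\mu|$, including $\mu$ deep inside the parabola $\{\lambda^2 : \Re\lambda=\omega\}$, i.e.\ near the negative real axis, where the Laplace-transform estimate of Lemma \ref{le:CosLapl} gives nothing; a Phragm\'en--Lindel\"of argument cannot start without a bound on that missing region.

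The paper closes exactly this gap with an explicit construction. The operator $S(\lambda,s)x=\int_0^s\sinh(\lambda(s-t))C(t)x\,dt$ satisfies $(\lambda^2I-A)S(\lambda,s)=\lambda\bigl(\cosh(\lambda s)I-C(s)\bigr)$, so whenever $\cosh(\lambda s)\in\rho(C(s))$ one obtains $\lambda^2\in\rho(A)$ together with a norm bound on $R(\lambda^2,A)$. The hypothesis $\limsup_{t\to0^+}\|C(t)-I\|<c<2$ enters by making $\frac12(C(s)+I)$ invertible for small $s$, i.e.\ $-1\in\rho(C(s))$ with a uniform resolvent bound; since $\cosh(i\pi)=-1$, for each $\lambda$ near the imaginary axis with $|\lambda|$ large one can choose a small $s_\lambda$ with $\lambda s_\lambda$ close to $i\pi$, which yields $\|R(\lambda^2,A)\|\le M_c/|\lambda|^2$ on precisely the region your outline leaves uncovered. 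Combined with Lemma \ref{le:CosLapl}, this shows $\mu\mapsto R(\mu,A)$ has a removable singularity at infinity, whence $A$ is bounded by Kato's theorem. Your scalar heuristic --- that a cosine of high frequency is forced to come within distance $2$ of $-1$ --- is the right one, but it must be implemented at the level of resolvents of $C(s)$ near the point $-1$, not merely at the level of spectra; without that quantitative ingredient the argument does not get past the $0$--$3/2$ law you correctly identify as the limit of purely metric reasoning.
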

By taking $X = \ell^2$ and 
\[
   C(t) = \left( \begin{array}{cccccc} \cos(t) & 0 & \cdots\\
  0 & \cos(2t) & 0 & \cdots \\
  \vdots & & \ddots &&
  \end{array} \right),
\]
it is easy to see that this result is optimal.
Whether one can get rid of the
assumption that the cosine family is strongly continuous remains open.

The zero-one law for semigroups and the zero-two law for cosine
families tells something about the behaviour near $t=0$. Instead of
studying the behaviour around zero, we could study the behaviour on the
whole time axis. A result dating back to the sixties is the following;
for a semigroup the assumption 
\begin{equation}
  \label{eq:1}
  \sup_{t\geq0}\|T(t) - I\| <1,
\end{equation}
implies that $T(t)=I$ for all $t\geq 0$, see e.g.\ Wallen
\cite{Wallen} and Hirschfeld \cite{Hirschfeld}. This result seems not
to be well-known among researchers working in the area of strongly
continuous semigroup. The corresponding result for cosine families,
i.e.,
\begin{equation}
\label{eq:1.6}
	\sup_{t\in {\mathbb R} } \|C(t) - I\| < 2 \quad\text{ implies that }\quad  C(t) = I
\end{equation}
is hardly studied at all. We prove this result for strongly continuous
cosine families on Banach spaces. This result is strongly motivated by
the recent work of A.~Bobrowski and W.~Chojnacki. In \cite[Theorem
4]{BobrowskiApprox}, they showed that if $r< \frac{1}{2}$, where
\begin{equation}
\label{eq:1new}
  r = \sup_{t\geq0}\|C(t)-\cos(at)I\|,
\end{equation}
then $C(t) = \cos(at)I$ for all $t \geq 0$. They used this to conclude
that scalar cosine families are isolated points within the space of
bounded strongly continuous cosine families acting on a fixed Banach
space, equipped with the supremum norm. 

Hence we show that for $a=0$ the $r$ can be chosen be 2, provided $C$
is strongly continuous. We remark
that by using the proof idea in \cite[Theorem 1.1 in Three Line
 Proofs]{UlmerSeminare2012} the implication 
\[
	\sup_{t\in {\mathbb R} } \|C(t) - I\| < r \quad\text{ implies that }\quad  C(t) = I
\]
holds for $r <\frac{3}{2}$ for any cosine family. While this paper was
being revised, we heard that A.~Bobrowski, W.\ Chojnacki and
A.~Gregosiewicz showed that for $a\neq 0$ the implication
\begin{equation}
\label{eq:ar}
	\sup_{t\in {\mathbb R} } \|C(t) - \cos(at)I \| < r
        \quad\text{ implies that }\quad  C(t) = \cos(at) I
\end{equation}
holds for general cosine families with $r=\frac{8}{3\sqrt{3}}$. This
constant is optimal, as can be directly seen by choosing $C(t)=
\cos(3at)I$. In \cite{ScZw-Ar14} we wrongly claimed that
$r=2$ was the optimal constant.

The lay-out of this paper is as follows. 
In Section \ref{sec:2} we prove the zero-two law for strongly
continuous cosine families, i.e., Theorem \ref{Tm:1.1} is
proved. In Section \ref{sec:3} we prove the implication
(\ref{eq:1.6}). Furthermore, we give elementary, alternative proofs
for strongly continuous semigroups. Throughout the paper, we use
standard notation, such as $\sigma(A)$ and $\rho(A)$ for the spectrum
and resolvent set of the
operator $A$, respectively. Furthermore, for $\lambda  \in \rho(A)$,
$R(\lambda,A)$ denotes $(\lambda I - A)^{-1}$.

\section{The zero-two law at the origin}
\label{sec:2}

In this section we prove that for the strongly
continuous cosine family $C$ on the Banach space $X$ Theorem \ref{Tm:1.1} holds; i.e.,
\[
	\limsup_{t\to0^{+}}\|C(t) - I\| <2 \quad\text{ implies that }\quad  \limsup_{t\to0^{+}}\|C(t) - I\| = 0.
\]
However, before we do so, we first recall the definition of a strongly
continuous cosine family. For more information we refer to \cite{ABHN}
or \cite{Fattorini69I}.
\begin{definition}
\label{def:cos}
 A family $C= \left( C(t) \right)_{t \in {\mathbb R}}$ of bounded
   linear operators on $X$ is called a {\em cosine family}\/ when the
   following two conditions hold
   \begin{enumerate}
   \item $C(0)=I$, and
   \item For all $t,s \in {\mathbb R}$ there holds
     \begin{equation}
       \label{eq:3}
       2C(t)C(s) = C(t+s) + C(t-s).
     \end{equation}
   \end{enumerate}
   It is defined to be {\em strongly continuous}, when for all $x\in
   X$ and all $t\in {\mathbb R}$ we
   have
   \[
      \lim_{h \rightarrow 0} C(t+h)x=C(t)x.
   \]
\end{definition}

Similar as for strongly continuous semigroups we can define the infinitesimal
generator.
\begin{definition}
\label{def:gen}
Let $C$ be a strongly continuous cosine family, then the
{\em infinitesimal generator}\ $A$ is defined as
\[
   Ax = \lim_{t \rightarrow 0} \frac{2(T(t)x-x)}{t^2}
\]
with its domain consisting of those $x \in X$ for which this limit exists.
\end{definition}

This infinitesimal generator is a closed, densely defined operator. For
the proof of Theorem \ref{Tm:1.1}, the following well-known estimates are
needed.  For a proof we refer to Lemma 5.5 and 5.6 in \cite{Fattorini69I} .
\begin{lemma}
\label{le:CosLapl}
Let $C$ be a strongly continuous cosine family with generator $A$. Then, there exists $\omega\geq0$ and $M\geq1$ such that  
\begin{equation}
\|C(t)\|\leq Me^{\omega t}\qquad \forall t\geq0.
\end{equation}
Furthermore, for $\Re\lambda>\omega$ we have $\lambda^{2}\in\rho(A)$ and
\begin{equation}
   \|\lambda^{2}R(\lambda^{2},A)\| \leq M\cdot \frac{|\lambda|}{\Re\lambda-\omega}.
\end{equation}
\end{lemma}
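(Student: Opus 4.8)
The plan is to treat the two assertions in turn: first the exponential growth bound, obtained from the functional equation and the uniform boundedness principle, and then the resolvent estimate, obtained by identifying the Laplace transform of $C$ with the resolvent of $A$.

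For the growth bound I would first record that \eqref{eq:3} forces $C$ to be even: putting $t=0$ gives $2C(s)=C(s)+C(-s)$, hence $C(-s)=C(s)$. Strong continuity makes $t\mapsto C(t)x$ continuous, so it is bounded on $[0,1]$ for each fixed $x$; by the uniform boundedness principle $M_{0}:=\sup_{t\in[0,1]}\|C(t)\|$ is finite, and $M_{0}\geq\|C(0)\|=1$. To upgrade local to global boundedness I would use the recurrence that \eqref{eq:3} yields for $s=1$, namely $C(t+1)=2C(1)C(t)-C(t-1)$. Packaging this as $v(t+1)=Bv(t)$ with $v(t)=\bigl(C(t),\,C(t-1)\bigr)^{\top}$ and the fixed bounded operator $B=\left(\begin{smallmatrix}2C(1)&-I\\ I&0\end{smallmatrix}\right)$ on $X^{2}$, iteration gives $\|v(t)\|\leq\|B\|^{\lfloor t\rfloor}\sup_{\tau\in[0,1]}\|v(\tau)\|$, where the supremum is finite by local boundedness and evenness. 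Choosing $\omega=\max\{0,\log\|B\|\}$ and $M\geq1$ accordingly yields $\|C(t)\|\leq Me^{\omega t}$ for all $t\geq0$.

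For the resolvent estimate, fix $\lambda$ with $\Re\lambda>\omega$. The growth bound makes
\[
  L(\lambda)x:=\int_{0}^{\infty}e^{-\lambda t}C(t)x\,dt
\]
an absolutely convergent Bochner integral for every $x\in X$, defining a bounded operator with $\|L(\lambda)\|\leq\int_{0}^{\infty}Me^{-(\Re\lambda-\omega)t}\,dt=\frac{M}{\Re\lambda-\omega}$. The essential step is to show that $\lambda^{2}\in\rho(A)$ and $L(\lambda)=\lambda R(\lambda^{2},A)$. For this I would invoke the fundamental identity
\[
  A\int_{0}^{t}(t-s)C(s)x\,ds=C(t)x-x\qquad(x\in X),
\]
which follows from the generator definition together with \eqref{eq:3} and the closedness of $A$. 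Multiplying it by $\lambda^{2}e^{-\lambda t}$ and integrating over $(0,\infty)$, using that the left-hand integrand is a convolution whose Laplace transform equals $L(\lambda)x$ (Fubini) and pulling $A$ through the integral by closedness, I obtain $L(\lambda)x\in D(A)$ and $(\lambda^{2}I-A)L(\lambda)x=\lambda x$. Since $A$ commutes with $C(t)$ on $D(A)$, a parallel computation gives $L(\lambda)(\lambda^{2}I-A)x=\lambda x$ for $x\in D(A)$, so $\tfrac1\lambda L(\lambda)$ is a two-sided inverse of $\lambda^{2}I-A$; this establishes $\lambda^{2}\in\rho(A)$ and $R(\lambda^{2},A)=\tfrac1\lambda L(\lambda)$.

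Combining $\lambda^{2}R(\lambda^{2},A)=\lambda L(\lambda)$ with the bound on $\|L(\lambda)\|$ finally gives
\[
  \|\lambda^{2}R(\lambda^{2},A)\|=|\lambda|\,\|L(\lambda)\|\leq M\cdot\frac{|\lambda|}{\Re\lambda-\omega}.
\]
The step I expect to be the main obstacle is the identification $L(\lambda)=\lambda R(\lambda^{2},A)$: establishing the fundamental identity and rigorously justifying the Fubini interchange together with the passage of the closed operator $A$ through the integral is the technical heart, whereas the growth bound and the concluding estimate are routine once it is in place.
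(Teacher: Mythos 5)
Your proposal is correct: the paper gives no proof of this lemma at all, deferring to Lemmas 5.5 and 5.6 of Fattorini's 1969 paper, and your argument (local boundedness via uniform boundedness plus the recurrence $C(t+1)=2C(1)C(t)-C(t-1)$ for the exponential bound, then identifying $\lambda R(\lambda^2,A)$ with the Laplace transform of $C$ through the identity $A\int_0^t(t-s)C(s)x\,ds=C(t)x-x$) is essentially the standard proof found there. The only ingredient you lean on without proof is that fundamental identity, which you correctly flag as the technical heart; it is a standard fact for strongly continuous cosine families.
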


Hence the above lemma shows that the spectrum of $A$ must lie within
the parabola $\{ s \in {\mathbb C} \mid s= \lambda^2 \mbox{ with } \Re\lambda =
\omega\}$. To study the spectral properties of the points within this
parabola, we can use the following lemma.
\begin{lemma}\label{le:CosId}
Let $C$ be a strongly continuous cosine family on the Banach space $X$
and let $A$ be its generator. Then, for $\lambda\in\mathbb{C}$ and
$s\in\mathbb{R}$ there holds 
\begin{enumerate}
\item \label{le:CosId1} $S(\lambda,s)$ defined by
          \begin{equation}
            \label{eq:4}
            S(\lambda,s)x=\int_{0}^{s}\sinh(\lambda(s-t))C(t)x\ dt, \quad x\in X,
          \end{equation}
	is a linear and bounded operator on $X$ and its norm satisfies
	\begin{equation}\label{eq:bddS}
	\|S(\lambda,s)\| \leq \sup_{t\in[0,|s|]}\|C(t)\|\cdot
        \frac{\sinh(|s|\Re\lambda)}{\Re\lambda}.
	\end{equation}
\item \label{le:CosId2} For $x \in X$ we have $S(\lambda,s)x\in D(A)$,
	\begin{align}\label{eq:CosId}
	(\lambda^{2}I - A)S(\lambda,s)x=\lambda (\cosh(\lambda s)I - C(s))x.
	\end{align}
	 Furthermore, $S(\lambda,s)A\subset AS(\lambda,s)$.
\item The bounded operators \label{le:CosId3}$S(\lambda,s)$ and $C(s)x-\cosh(\lambda s)I$ commute.
\item \label{le:CosId4}If $\lambda\neq0$ and $\cosh(\lambda s)\in\rho(C(s))$, then $\lambda^{2}\in\rho(A)$ and
		\begin{align}
		\|R(\lambda^{2},A)\| \leq {}& \frac{1}{|\lambda|}\cdot \|S(\lambda,s)\|\cdot \|R(\cosh(\lambda s),C(s))\|\notag \\
		 \leq {}& \sup_{t\in[0,|s|]}\|C(t)\|\cdot  \frac{2|s| e^{|s\Re\lambda|}}{|\lambda|} \cdot \|R(\cosh(\lambda s),C(s))\|.\label{eq:CosId2}
		\end{align}
\end{enumerate}
\end{lemma}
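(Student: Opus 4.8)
The plan is to establish Lemma~\ref{le:CosId} in the order its four parts are stated, since each part feeds into the next. The central object is the operator $S(\lambda,s)$ defined by the integral \eqref{eq:4}, which is the cosine-family analogue of the Laplace-type integral used to invert generators; morally, $S(\lambda,s)$ plays the role of a finite-time resolvent that relates $\cosh(\lambda s)I - C(s)$ to $\lambda^2 I - A$.

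For part~\ref{le:CosId1}, I would first note that $t\mapsto C(t)x$ is continuous, so the integrand $\sinh(\lambda(s-t))C(t)x$ is a continuous $X$-valued function on $[0,s]$ and the Bochner integral exists and depends linearly on $x$; boundedness and the norm estimate \eqref{eq:bddS} then follow by pulling the norm inside the integral and bounding $|\sinh(\lambda(s-t))|\le \sinh(|s|\,\Re\lambda)$ after splitting $\sinh$ into exponentials (the estimate $|\sinh(z)|\le\sinh(\Re z)$ for the relevant range, integrated against $\sup_{t}\|C(t)\|$). For part~\ref{le:CosId2}, the key computation is to verify \eqref{eq:CosId}. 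I would apply the cosine addition law \eqref{eq:3} to rewrite the integrand, or more directly differentiate under the integral sign using the defining relation of the generator: integrating $\sinh(\lambda(s-t))$ against $C(t)x$ and using integration by parts twice (exploiting that the second derivative of $\sinh(\lambda(s-t))$ in $t$ reproduces $\lambda^2 \sinh(\lambda(s-t))$, while the second derivative of $C(t)x$ is formally $AC(t)x$) should produce both the $\lambda^2 S(\lambda,s)x$ term and the boundary term $\lambda(\cosh(\lambda s)I - C(s))x$. Closedness of $A$ justifies moving $A$ through the integral, giving $S(\lambda,s)x\in D(A)$ and the commutation $S(\lambda,s)A\subset AS(\lambda,s)$.

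Part~\ref{le:CosId3} should follow from the fact that each $C(t)$ commutes with $C(s)$ (an immediate consequence of \eqref{eq:3}, since $C(t+s)+C(t-s)=C(s+t)+C(s-t)$ is symmetric in $t,s$), so $C(s)$ commutes with the integrand pointwise and hence with the integral $S(\lambda,s)$; since $\cosh(\lambda s)I$ is scalar it trivially commutes, giving the claim. Part~\ref{le:CosId4} is then the payoff: assuming $\lambda\neq 0$ and $\cosh(\lambda s)\in\rho(C(s))$, I would multiply \eqref{eq:CosId} on the right by $R(\cosh(\lambda s),C(s))$ and divide by $\lambda$ to exhibit a bounded right inverse of $(\lambda^2 I - A)$; using the commutation from parts~\ref{le:CosId2} and~\ref{le:CosId3} I would check it is also a left inverse on $D(A)$, concluding $\lambda^2\in\rho(A)$ with $R(\lambda^2,A)=\tfrac{1}{\lambda}S(\lambda,s)R(\cosh(\lambda s),C(s))$. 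The norm bound \eqref{eq:CosId2} then follows by combining this identity with \eqref{eq:bddS}, after estimating $\sinh(|s|\Re\lambda)/\Re\lambda \le 2|s|e^{|s\Re\lambda|}$ (a routine elementary inequality, valid since $\sinh(u)\le u\,e^{u}$ for $u\ge0$ gives a slightly different constant, so I would instead bound $\sinh(|s|\Re\lambda)\le \tfrac12 e^{|s|\Re\lambda}$ and absorb the factor $\Re\lambda^{-1}$ against $|s|$ carefully).

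The main obstacle I anticipate is making the integration-by-parts argument in part~\ref{le:CosId2} rigorous at the level of the unbounded generator $A$: one cannot naively differentiate $C(t)x$ twice unless $x\in D(A)$, so I expect to prove the identity first for $x$ in a suitable core (e.g.\ $x\in D(A)$, where $t\mapsto C(t)x$ is twice differentiable with $C''(t)x=AC(t)x$) and then extend to all $x\in X$ by a density and closedness argument, using that both sides of \eqref{eq:CosId} are bounded in $x$ while $A$ is closed. Keeping track of the boundary terms at $t=0$ and $t=s$ (where $\sinh(\lambda(s-t))$ and its derivative $-\lambda\cosh(\lambda(s-t))$ must be evaluated, together with $C(0)=I$) is where the precise constants in \eqref{eq:CosId} are produced, so that is the step demanding the most care.
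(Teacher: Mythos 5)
Your proposal is correct and follows the same overall route as the paper: the norm bound in part~\ref{le:CosId1} by pulling the norm through the Bochner integral, commutativity of the $C(t)$'s for part~\ref{le:CosId3}, and for part~\ref{le:CosId4} the construction of $B=\tfrac{1}{\lambda}S(\lambda,s)R(\cosh(\lambda s),C(s))$ as a two-sided inverse of $\lambda^{2}I-A$ via parts~\ref{le:CosId2} and~\ref{le:CosId3}. The one genuine difference is part~\ref{le:CosId2}: the paper simply cites Nagy's Lemma~4, whereas you supply the standard proof (double integration by parts against $\sinh(\lambda(s-t))$ for $x\in D(A)$, using $C''(t)x=AC(t)x$ and $C'(0)x=0$, then extension to all of $X$ by density and closedness of $A$); that makes the lemma self-contained at no real cost and is a perfectly sound substitute. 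Two small corrections. First, the pointwise inequality $|\sinh(z)|\leq\sinh(\Re z)$ you invoke is false (take $z=iy$); the correct pointwise bound is $|\sinh(z)|\leq\tfrac12\bigl(e^{\Re z}+e^{-\Re z}\bigr)=\cosh(\Re z)$, and integrating $\cosh(t\Re\lambda)$ over $[0,|s|]$ is exactly what produces $\sinh(|s|\Re\lambda)/\Re\lambda$ in \eqref{eq:bddS} --- note also that bounding the integrand by its supremum and multiplying by $|s|$ would give $|s|\sinh(|s|\Re\lambda)$, which is not the claimed bound. Second, your hedging about $\sinh(|s|\Re\lambda)/\Re\lambda\leq 2|s|e^{|s\Re\lambda|}$ is unnecessary, and your fallback of absorbing $(\Re\lambda)^{-1}$ into $|s|$ would fail for small $\Re\lambda$; the clean argument (the one the paper uses) is the power-series comparison $\sinh(u)/u=\sum_{k\geq0}u^{2k}/(2k+1)!\leq e^{|u|}$ with $u=|s|\Re\lambda$, which even gives the constant $1$ in place of $2$.
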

\begin{proof}
  We begin by showing item \ref{le:CosId1}. Since the cosine family is
  strongly continuous, the integral in (\ref{eq:4}) is
  well-defined. Hence $S(\lambda,s)$ is well defined and linear. So
  it remains to consider
	\begin{align*}
		\|S(\lambda,s)x\| \leq{}& \sup_{t\in[0,|s|]}\|C(t)\|\cdot\|x\|\cdot \int_{0}^{|s|}|\sinh(\lambda t)|\ dt\\
						={}&\sup_{t\in[0,|s|]}\|C(t)\|\cdot\|x\|\cdot \frac{1}{2}\int_{0}^{|s|}|e^{\lambda t}-e^{-\lambda t}| \ dt\\	
						\leq{}&\sup_{t\in[0,|s|]}\|C(t)\|\cdot\|x\|\cdot \frac{e^{|s|\Re\lambda}-e^{-|s|\Re\lambda}}{2\Re\lambda}.
		\end{align*}
Since by definition, the last fraction equals
$\frac{\sinh(|s|\Re\lambda)}{\Re\lambda}$, the inequality
(\ref{eq:bddS}) is shown.

\noindent
{\em Item \ref{le:CosId2}}.\ See  \cite[Lemma4]{Nagy74Sz}. 

\noindent
{\em Item \ref{le:CosId3}}.\ This is clear, since $C(t)$ and $C(s)$ commute for $s,t
\in {\mathbb R}$.

\noindent
{\em Item \ref{le:CosId4}}.\ We define the bounded operator
\[
  B=\frac{1}{\lambda}S(\lambda,s)R(\cosh(\lambda s),C(s)).
\]
By item \ref{le:CosId2}., we see that
$(\lambda^{2} I - A )B=I$.
By item \ref{le:CosId3}., we get that
$B=\frac{1}{\lambda}R(\cosh(\lambda s),C(s))S(\lambda,s)$. Thus, again
by \ref{le:CosId2}., $B(\lambda^{2} I -A )x=x$ for $x\in D(A)$. Hence,
$\lambda^{2}\in\rho(A)$ and first inequality of (\ref{eq:CosId2})
follows. By using the power series of the exponential function, it is
easy to see that $\frac{\sinh(|s|\Re\lambda)}{\Re\lambda}\leq
2|s|e^{|s\Re\lambda|}$. Combining this with (\ref{eq:bddS}) gives the
second inequality in (\ref{eq:CosId2}).
\end{proof}

With the use of the above lemma we can show that the spectrum of $A$ is
contained in the intersection of a ball and a parabola, provided (\ref{eq:2}) holds, i.e.,
$\limsup_{t\to0^{+}}\|C(t)-I\|<2$.
\begin{lemma}\label{le:Pinv}
Let $C$ be a strongly continuous cosine family on the Banach space $X$ with generator $A$. Assume that there exists $c>0$ such that 
\begin{equation}\label{le:Pinveqass}
\limsup_{t\to0^{+}}\|C(t)-I\|<c<2.
\end{equation}
Then, there exists $M_{c},r_{c}>0$ and $\phi_{c}\in(0,\frac{\pi}{2})$ such that 
\begin{equation}
\label{eq:Rc}
 \mathcal{R}_{c}:=\left\{\lambda^{2} \mid \lambda\in\mathbb{C}, |\lambda|>r_{c},|\arg(\lambda)|\in\left(\phi_{c},\frac{\pi}{2}\right]\right\} \subset\rho(A),
\end{equation}
and 
\begin{equation}
\forall \mu\in \mathcal{R}_{c}\qquad \|\mu R(\mu,A)\|\leq M_{c}.
\end{equation}
\end{lemma}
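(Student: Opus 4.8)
The plan is to apply part \ref{le:CosId4} of Lemma \ref{le:CosId} with a value of $s$ chosen \emph{depending on} $\lambda$, so as to push $\cosh(\lambda s)$ far away from the spectrum of $C(s)$ while keeping every constant uniform over $\mathcal{R}_c$. First I would use hypothesis (\ref{le:Pinveqass}) to fix $\delta>0$ with $\|C(s)-I\|<c$ for all $s\in(0,\delta)$, and record from Lemma \ref{le:CosLapl} that $N_c:=\sup_{t\in[0,\delta]}\|C(t)\|<\infty$. These are the only global quantities the argument will need besides $c$.

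Now fix $\lambda$ with $|\lambda|>r_c$ and $\theta:=|\arg(\lambda)|\in(\phi_c,\tfrac{\pi}{2}]$, so that $\Re\lambda=|\lambda|\cos\theta\geq0$ and $|\Im\lambda|=|\lambda|\sin\theta>0$. The key device is to set $s:=\pi/|\Im\lambda|$. Then $\Im(\lambda s)=\pm\pi$, and using $\cosh(x+iy)=\cosh x\cos y+i\sinh x\sin y$ one finds $\cosh(\lambda s)=-\cosh(s\Re\lambda)\leq-1$; in particular $|\cosh(\lambda s)-1|=1+\cosh(s\Re\lambda)\geq2$. Writing $\cosh(\lambda s)I-C(s)=(\cosh(\lambda s)-1)I-(C(s)-I)$, the scalar term dominates $C(s)-I$ in norm (since $2>c>\|C(s)-I\|$ once $s\in(0,\delta)$), so a Neumann series yields $\cosh(\lambda s)\in\rho(C(s))$ together with $\|R(\cosh(\lambda s),C(s))\|\leq(2-c)^{-1}$.

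To ensure $s=\pi/|\Im\lambda|$ really lands in $(0,\delta)$, I would fix any $\phi_c\in(0,\tfrac{\pi}{2})$ and choose $r_c:=\pi/(\delta\sin\phi_c)$: then $|\lambda|>r_c$ forces $|\Im\lambda|=|\lambda|\sin\theta\geq r_c\sin\phi_c>\pi/\delta$, hence $s<\delta$. With $\cosh(\lambda s)\in\rho(C(s))$ established, part \ref{le:CosId4} immediately gives $\lambda^2\in\rho(A)$, proving $\mathcal{R}_c\subset\rho(A)$, and supplies the resolvent estimate (\ref{eq:CosId2}). What remains is to convert this into a uniform bound on $\|\mu R(\mu,A)\|$ for $\mu=\lambda^2$.

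The main obstacle, and the one place where the angular restriction $\theta>\phi_c$ is genuinely used, is controlling $\|S(\lambda,s)\|$ uniformly. Multiplying the first inequality of (\ref{eq:CosId2}) by $|\mu|=|\lambda|^2$ gives $\|\mu R(\mu,A)\|\leq|\lambda|\,\|S(\lambda,s)\|\,\|R(\cosh(\lambda s),C(s))\|$, and by (\ref{eq:bddS}), $|\lambda|\,\|S(\lambda,s)\|\leq N_c\,|\lambda|\,\sinh(s\Re\lambda)/\Re\lambda$. Substituting $s\Re\lambda=\pi\cot\theta$ and $\Re\lambda=|\lambda|\cos\theta$ collapses the right-hand side to $N_c\,\sinh(\pi\cot\theta)/\cos\theta$, which is bounded on $\theta\in(\phi_c,\tfrac{\pi}{2}]$: as $\theta\to\tfrac{\pi}{2}$ the ratio tends to $\pi$, while $\cot\theta\leq\cot\phi_c$ keeps the hyperbolic sine away from the pole at $\theta=0$. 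Denoting this supremum by $K_c$, I would conclude $\|\mu R(\mu,A)\|\leq N_cK_c/(2-c)=:M_c$, uniformly over $\mathcal{R}_c$. The only delicate checks are that the $\lambda$-dependent choice $s=\pi/|\Im\lambda|$ stays in $(0,\delta)$ (handled by $r_c$) and that $\sinh(\pi\cot\theta)/\cos\theta$ together with $\cosh(s\Re\lambda)$ remain bounded, which is precisely what the restriction $\theta>\phi_c$ secures.
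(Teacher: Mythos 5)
Your proof is correct and follows essentially the same route as the paper: both arguments choose $s$ depending on $\lambda$ so that $\lambda s$ lands near $i\pi$, thereby forcing $\cosh(\lambda s)$ close to $-1$ and hence at distance at least $2-c$ from the ball of radius $c$ around $1$ containing $\sigma(C(s))$, and then invoke part \ref{le:CosId4} of Lemma \ref{le:CosId} together with the bound (\ref{eq:bddS}) on $\|S(\lambda,s)\|$. The only (pleasant) difference is that your choice $s=\pi/|\Im\lambda|$ makes $\cosh(\lambda s)$ exactly real and $\leq -1$, which replaces the paper's continuity step (the auxiliary ball $B_{\tilde{r}}(i\pi)$ mapped by $\cosh$ into $B_{\frac{2-c}{2}}(-1)$, which forces $\phi_c$ to be close to $\frac{\pi}{2}$) by a one-line Neumann series and lets $\phi_c$ be arbitrary in $(0,\frac{\pi}{2})$ --- a harmless strengthening that changes nothing in the application.
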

\begin{proof}
First, we note that by (\ref{le:Pinveqass}) we have that there exists a $t_{0}>0$ such that $\|C(t)-I\|<c$ for all $t\in[0,t_{0})$, and by symmetry, for all $t\in(-t_{0},t_{0})$. Since $c<2$, we conclude that $\frac{1}{2}\|C(t)-I\|<\frac{c}{2}<1$, hence, $I+\frac{1}{2}(C(t)-I)=\frac{1}{2}(C(t)+I)$ is invertible with $\|(C(t)+I)^{-1}\|<\frac{1}{2-c}$ for all $t\in(-t_{0},t_{0})$. This implies that $-1\in\rho(C(t))$. By standard spectral theory it follows that the open ball centered at $-1$ with radius $\|R(-1,C(t))\|^{-1}$ is included in $\rho(C(t))$. Therefore,
\begin{equation}
\label{eq:prooflePinv}
	B_{\frac{2-c}{2}}(-1)\subset B_{\frac{1}{2\|R(-1,C(t))\|}}(-1)\subset \rho(C(t)) \quad \forall t\in(-t_{0},t_{0}),
\end{equation}
and by the analyticity of the resolvent, we have for $\mu\in B_{\frac{2-c}{2}}(-1)$ and $t\in(-t_{0},t_{0})$ that 
	\begin{align}\notag
		\|R(\mu,C(t))\| ={}& \left\|\sum_{n=0}^{\infty} (\mu+1)^{n}R(-1,C(t))^{n+1}\right\|\\
			{}&\leq 2\|R(-1,C(t))\| < \frac{2}{2-c}. \label{eq:prooflePinv1}
	\end{align}
 Since $\cosh(i\pi)=-1$, by continuity there exists ball in the
 complex plane with center $i\pi$ which is mapped under the $\cosh$
 inside the ball around $-1$. That is, there exists a $\tilde{r}>0$ such that 
 \begin{equation}\label{eq:prooflePinv2}
 \cosh(B_{\tilde{r}}(i\pi))\subset B_{\frac{1-c}{2}}(-1).
 \end{equation}
  Let $\lambda\in\mathbb{C}$ be such that
  $|\arg(\lambda)|\leq\frac{\pi}{2}$. We search for $s\in\mathbb{R}$
  such that $\lambda s\in B_{\tilde{r}}(i\pi)$. Let
  $s_{\lambda}=\frac{\pi \sin(\arg(\lambda))}{|\lambda|}$ be the
  unique element on the line $\left\{\lambda s:s\in\mathbb{R}\right\}$
  which is closest to $i\pi$. We have that  $|i\pi-\lambda
  s_{\lambda}|=\pi\cos(\arg(\lambda))$. Now, choose
  $\phi_{c}\in(0,\frac{\pi}{2})$ large enough such that
  $\pi\cos(\phi_{c})<\tilde{r}$ and choose $r_{c}>0$ such that $\frac{\pi}{r_c}<t_{0}$. Then, for all $\lambda^{2}\in\mathcal{R}_{c}$, we have that $\lambda s_{\lambda}\in B_{\tilde{r}}(i\pi)$ with $s_{\lambda}\in(-t_{0},t_{0})$. By (\ref{eq:prooflePinv2}), $\cosh(\lambda s_{\lambda})\in B_{\frac{2-c}{2}}(-1)$ and thus,
  \begin{equation}
  \cosh(\lambda s_{\lambda})\in\rho(C(s_{\lambda})), \quad \text{and} \quad \|R(\cosh(\lambda s_{\lambda}),C(s_{\lambda}))\|\leq \frac{2}{2-c},
  \end{equation}
  by (\ref{eq:prooflePinv}) and (\ref{eq:prooflePinv1}). Therefore, \ref{le:CosId4}.\ of Lemma \ref{le:CosId} implies that $\lambda^{2}\in\rho(A)$ and
  	\begin{align*}
	\| R(\lambda^{2},A)\| \leq{}& \sup_{t\in[0,|s_{\lambda}|]}\|C(t)\|\cdot  \frac{2|s_{\lambda}|e^{|s_{\lambda}\Re\lambda|}}{|\lambda|} \cdot \|R(\cosh(\lambda s),C(s_{\lambda}))\|\\
	\leq{}&\sup_{t\in[0,t_{0}]}\|C(t)\|\cdot  \frac{2\pi e^{\pi}}{|\lambda|^{2}} \cdot \frac{2}{2-c}\leq \frac{M_{c}}{|\lambda|^{2}}
	\end{align*}
 for some $M_{c}$ only depending on $\sup_{t\in[0,t_{0}]} \|C(t)\|$ and $c$.
\end{proof}

Combining the results from Lemmas \ref{le:CosLapl} and \ref{le:Pinv}
enable us to prove Theorem \ref{Tm:1.1}. As for semigroups we can prove a
slightly more general result.
\begin{Theorem}[Zero-two law for cosine families]
\label{thm:zerotwolaw}
Let $C$ be a strongly continuous cosine family on the Banach space
$X$. Denote by $A$ its infinitesimal generator. Then the following are equivalent
\begin{enumerate}
\item The following inequality holds
  \[
     \limsup_{t \rightarrow 0^+} \|C(t) - I\| < 2;
  \]
\item
  The following equality holds
  \[
     \limsup_{t \rightarrow 0^+} \|C(t) - I\| = 0;
  \]
 \item $A$ is a bounded operator.
\end{enumerate}
\end{Theorem}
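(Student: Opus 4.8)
The plan is to establish the cyclic chain of implications $(1)\Rightarrow(3)\Rightarrow(2)\Rightarrow(1)$. The implication $(2)\Rightarrow(1)$ is immediate since $0<2$, and $(3)\Rightarrow(2)$ follows from an elementary power-series estimate: if $A$ is bounded, then the unique strongly continuous cosine family with generator $A$ is given by the norm-convergent series $C(t)=\sum_{n=0}^{\infty}\frac{t^{2n}}{(2n)!}A^{n}$, so that $\|C(t)-I\|\leq\cosh(t\sqrt{\|A\|})-1\to0$ as $t\to0^{+}$. Here one invokes the standard fact (see \cite{ABHN}) that a strongly continuous cosine family is uniquely determined by its generator. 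Thus the entire weight of the theorem rests on the implication $(1)\Rightarrow(3)$.

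For $(1)\Rightarrow(3)$ I would combine the two resolvent estimates already at our disposal. Lemma \ref{le:CosLapl} places $\sigma(A)$ inside the parabola $\{\lambda^{2}\mid\Re\lambda=\omega\}$ and controls $\|\mu R(\mu,A)\|$ in the interior of this region, i.e.\ for $\mu=\lambda^{2}$ with $\Re\lambda>\omega$. Crucially, as $|\mu|\to\infty$ this interior fills up every direction whose argument stays bounded away from $\pi$: writing $\lambda=\sqrt{|\mu|}e^{i\arg(\mu)/2}$, if $|\arg(\mu)|\leq\pi-\varepsilon$ then $\Re\lambda-\omega$ grows proportionally to $|\lambda|$, so the bound $\|\mu R(\mu,A)\|\leq M|\lambda|/(\Re\lambda-\omega)$ stays uniformly bounded. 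The remaining directions, those with argument close to $\pi$, are exactly the thin sliver around the negative real axis into which the arms of the parabola collapse asymptotically; this sliver is precisely what the sector $\mathcal{R}_{c}$ of Lemma \ref{le:Pinv} covers, since $\mathcal{R}_{c}$ consists of all $\lambda^{2}$ with $|\arg(\lambda)|\in(\phi_{c},\tfrac{\pi}{2}]$, that is $|\arg(\mu)|\in(2\phi_{c},\pi]$, and $2\phi_{c}<\pi$. Choosing the angular cut-off $\varepsilon$ so that the two regions overlap, and then the radius large enough, one obtains $R_{0}>0$ with $\{|\mu|>R_{0}\}\subset\rho(A)$ and $\|\mu R(\mu,A)\|\leq K$ on this set, for a constant $K$ depending only on $M$, $M_{c}$ and $\phi_{c}$. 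In particular $\sigma(A)$ is bounded and $\|R(\mu,A)\|\leq K/|\mu|\to0$ as $|\mu|\to\infty$.

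It then remains to deduce that $A$ is bounded from the facts that $\sigma(A)$ is bounded and that the resolvent decays at infinity; bounded spectrum alone would not suffice, as an unbounded closed operator may even have empty spectrum. Here I would use the Laurent expansion of the resolvent: since $R(\cdot,A)$ is analytic on $\{|\mu|>R_{0}\}$ and tends to $0$ at infinity, it has an expansion $R(\mu,A)=\sum_{n\geq1}C_{n}\mu^{-n}$ with bounded coefficients $C_{n}=\frac{1}{2\pi i}\oint_{|\mu|=R_{1}}\mu^{n-1}R(\mu,A)\,d\mu$, and the closedness of $A$ shows $\operatorname{ran}(C_{n})\subset D(A)$. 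Feeding this expansion into the identity $AR(\mu,A)x=\mu R(\mu,A)x-x$ and matching coefficients, the $\mu^{0}$ term forces $C_{1}=I$; since $\operatorname{ran}(C_{1})\subset D(A)$, this yields $D(A)=X$, whence $A$ is bounded by the closed graph theorem. The main obstacle is the geometric bookkeeping of the second paragraph: verifying that the interior region of Lemma \ref{le:CosLapl} and the sector $\mathcal{R}_{c}$ of Lemma \ref{le:Pinv} genuinely overlap and jointly exhaust a full punctured neighbourhood of infinity, with resolvent bounds that do not degenerate in the transition zone near the parabola. Once that covering is secured, the passage from decaying resolvent to boundedness is routine.
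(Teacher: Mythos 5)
Your proposal is correct and follows essentially the same route as the paper: the only substantive implication is $(1)\Rightarrow(3)$, proved by patching the resolvent bound of Lemma \ref{le:CosLapl} (valid outside the parabola, covering arguments bounded away from $\pi$) together with the bound on $\mathcal{R}_c$ from Lemma \ref{le:Pinv} (covering the remaining sliver near the negative real axis) to get $\sigma(A)\subset B_R(0)$ and $\sup_{|\mu|>R}\|\mu R(\mu,A)\|<\infty$, and then concluding boundedness of $A$. The only difference is cosmetic: where the paper cites Kato for the final step, you prove it directly via the Laurent expansion of the resolvent at infinity, which is a legitimate unwinding of that citation.
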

\begin{proof}
Trivially the second item implies the first one. If the assertion in
part 3 holds, then the corresponding cosine family is given by $C(t) =
\sum_{n =0}^{\infty} A^n \frac{(-1)^n t^{2n}}{(2n)!}$. From this the
property in item 2 is easy to show. Hence it remains to show that item
1 implies item 3.

Let $c$ be the constant from equation (\ref{le:Pinveqass}), and let
$r_{c}>0,\phi_{c}\in[0,\frac{\pi}{2})$ be the constants from Lemma
\ref{le:Pinv}.  By Lemma \ref{le:CosLapl}, we have that there exists
$\omega'>\omega\geq0$ such that
\begin{equation}
\label{thm:Reseq1}
	\sup_{\lambda\in R_{\omega'} \cap S_{\phi_{c}}} \|\lambda^{2} R(\lambda^{2},A)\| <\infty,
\end{equation}
where
$R_{\omega'}=\left\{\lambda\in\mathbb{C}:\Re\lambda\geq\omega'\right\}$
and
$S_{\phi_{c}}=\left\{\mu\in\mathbb{C}:|\arg\mu|\leq\phi_{c}\right\}$. Now,
let $\lambda$ such that $|\lambda|>r_{c}$ and
$|\arg(\lambda)|\in(\phi_{c},\frac{\pi}{2}]$. Thus
$\lambda^{2}\in\mathcal{R}_{c}$, see (\ref{eq:Rc}), and so by Lemma \ref{le:Pinv}, 
\begin{equation}
\label{thm:Reseq2}
	\sup_{\lambda^{2}\in\mathcal{R}_{c}}\|\lambda^{2}R(\lambda^{2},A)\|<\infty.
\end{equation}
Let $f(z)=z^{2}$. It is easy to see that the closure of
$\mathbb{C}\setminus \left(\mathcal{R}_{c}\cup f(R_{\omega'}\cap
  S_{\phi_{c}})\right)$ is compact. Thus, (\ref{thm:Reseq1}) and
(\ref{thm:Reseq2}) yield that there exists an $R>0$ such that the
spectrum $\sigma(A)$ lies within the open ball $B_{R}(0)$ and
\begin{equation}
	\sup_{|\mu|>R}\|\mu R(\mu,A)\| <\infty.
\end{equation}
Hence we have that $\mu\mapsto R(\mu,A)$ has a removable singularity
at $\infty$. Since $A$ is closed, this implies that $A$ is a bounded
operator, \cite[Theorem I.6.13]{Kato}, and therefore part 3 is shown.
\end{proof}

\section{Similar laws on ${\mathbb R}$}
\label{sec:3}

In this previous sections we showed that uniform estimates in a
neighbourhood of zero implied additional properties. In this section we
study estimates which hold on ${\mathbb R}$ or $(0,\infty)$. We show
that by applying a scaling trick, the results can be obtained from the
already proved laws. The main theorem of this section is the following.
\begin{Theorem}
\label{tm:3.1}
The following assertions hold
\begin{enumerate}
	\item For a semigroup $T$ we have that (\ref{eq:1}) implies
          that $T(t)=I$ for all $t\geq 0$.
	\item If the strongly continuous cosine family $C$ on the
          Banach space $X$ satisfies
	\begin{equation}\label{assertCOS}
		\sup_{t\geq0}\|C(t)-I\|=r<2
	\end{equation}
  then $C(t)=I$ for all $t$. 
\end{enumerate}
\end{Theorem}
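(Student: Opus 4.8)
The plan is to exploit the already-established zero-two law (Theorem \ref{thm:zerotwolaw}) together with the cosine functional equation to bootstrap a pointwise estimate that holds on all of $\mathbb{R}$ into the conclusion $C(t)=I$. Since the hypothesis $\sup_{t\geq0}\|C(t)-I\|=r<2$ is far stronger than the $\limsup_{t\to0^+}$ condition, Theorem \ref{thm:zerotwolaw} immediately gives that the generator $A$ is bounded; hence $C(t)=\sum_{n=0}^\infty A^n\frac{(-1)^n t^{2n}}{(2n)!}=\cos(t\sqrt{-A})$ in the sense of the (bounded) functional calculus. The remaining task is purely spectral: I want to show that the uniform bound $\sup_{t}\|C(t)-I\|<2$ forces $\sigma(A)=\{0\}$ and in fact $A=0$.

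First I would handle the semigroup assertion (item 1), since it is the model for the cosine case and is the simpler scaling argument. Given $\sup_{t\geq0}\|T(t)-I\|<1$, the zero-one law (cited in the introduction) already yields that the generator is bounded, so $T(t)=e^{tA}$. The key trick is the following scaling observation: if $\sup_t\|T(t)-I\|<1$, then for each fixed $s$ and each integer $n$ one has $T(ns)=T(s)^n$, and applying the estimate along the powers, together with a resolvent/spectral-mapping argument, forces $\sigma(A)\subset\{0\}$; more directly, $\sup_t\|e^{tA}-I\|<1$ is incompatible with any nonzero point $\mu\in\sigma(A)$, because $e^{t\mu}-1$ would have to stay in the closed disk of radius $<1$ for all $t\geq0$, which is impossible unless $\mu=0$, and a bounded operator with spectrum $\{0\}$ satisfying this uniform bound must be $0$.

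For the cosine family (item 2) I would run the analogous spectral argument. Having $A$ bounded, write $\sigma(A)$ for its (compact) spectrum; by the spectral mapping theorem applied to the entire function defining $C(t)$, the spectrum of $C(t)$ is $\{\cos(t\sqrt{-\mu}):\mu\in\sigma(A)\}$, where the branch is chosen consistently. The hypothesis $\sup_{t}\|C(t)-I\|<2$ forces, for every $\mu\in\sigma(A)$, that $\sup_t|\cos(t\sqrt{-\mu})-1|\leq r<2$. The heart of the matter is the elementary complex-analytic fact that if $\nu\neq0$ then $\sup_{t\in\mathbb{R}}|\cos(t\nu)-1|=2$ when $\nu$ is real, and $=\infty$ when $\nu$ has nonzero imaginary part; consequently the uniform bound $r<2$ is only compatible with $\nu=0$, i.e.\ $\mu=0$. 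Thus $\sigma(A)=\{0\}$. One then upgrades $\sigma(A)=\{0\}$ to $A=0$: since $A$ is bounded with spectrum $\{0\}$, it is quasinilpotent, and if $A\neq 0$ a short computation with $C(t)-I=-\tfrac{t^2}{2}A+O(t^4)$ shows that $\|C(t)-I\|$ would exceed $r$ for suitable large $t$ unless all the nilpotent contributions vanish, giving $A=0$ and hence $C(t)=I$ for all $t$.

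The main obstacle I anticipate is the passage from $\sigma(A)=\{0\}$ to $A=0$ in the non-self-adjoint, general Banach space setting, where a quasinilpotent bounded $A$ need not vanish and the naive spectral-radius bound gives no control. The clean way around this is to argue, not via the spectrum, but directly through the functional-calculus representation: I would fix $x\in X$ and $x^*\in X^*$, consider the scalar entire function $g(t)=\langle (C(t)-I)x,x^*\rangle$, and use that $\sup_t|g(t)|\leq r\|x\|\,\|x^*\|<2$ together with the structure $g(t)=\langle(\cos(t\sqrt{-A})-I)x,x^*\rangle$ to show that every nonconstant such function must, by an almost-periodicity or maximum-type argument applied to the bounded cosine of a bounded operator, attain values of modulus arbitrarily close to $2$; pinning this down so that it forces $Ax=0$ for all $x$ is the delicate point and is where I would spend most of the effort.
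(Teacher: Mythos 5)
Your proposal diverges from the paper's proof and, as written, has a genuine gap at its decisive step. The first part of your argument is sound: the hypothesis $\sup_{t\geq0}\|C(t)-I\|=r<2$ implies the $\limsup$ condition at $0$, so Theorem \ref{thm:zerotwolaw} gives that $A$ is bounded, and the spectral mapping theorem for the entire function $\mu\mapsto\sum_n(-1)^n\mu^n t^{2n}/(2n)!$ together with the elementary facts about $\sup_t|\cos(t\nu)-1|$ does force $\sigma(A)=\{0\}$ (and similarly $\sigma(A)=\{0\}$ in the semigroup case). But on a general Banach space a bounded operator with spectrum $\{0\}$ is merely quasinilpotent, not zero, and this is exactly the step you leave open. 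The two devices you sketch for closing it do not work as described: the expansion $C(t)-I=-\tfrac{t^2}{2}A+O(t^4)$ gives no usable lower bound on $\|C(t)-I\|$ for large $t$, because for a quasinilpotent $A$ the tail $\sum_{n\geq2}\frac{t^{2n}}{(2n)!}\|A^n\|$ cannot be dominated by $\tfrac{t^2}{2}\|A\|$ uniformly in $t$; and the ``almost-periodicity or maximum-type argument'' is named but not carried out --- you yourself flag it as the point where the effort would go. (The gap is in fact closable: for quasinilpotent $A$ each scalar function $t\mapsto\langle C(t)x,x^*\rangle$ is entire of exponential type zero and bounded on $\mathbb{R}$, hence constant by Phragm\'en--Lindel\"of and Liouville, giving $C(t)\equiv I$; but that argument is not in your proposal.)

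The paper avoids all of this spectral analysis with a single scaling trick: it forms $\ell^2(\mathbb{N};X)$ and the diagonal cosine family $C_{\mathrm{ext}}(t)(x_n)=(C(nt)x_n)$. The global hypothesis (\ref{assertCOS}) is used precisely to guarantee $\|C_{\mathrm{ext}}(t)-I\|\leq r<2$ for all small $t>0$ (each diagonal entry $C(nt)$ is within $r$ of $I$ no matter how large $n$ is), so the zero-two law at the origin applies to $C_{\mathrm{ext}}$ and yields that its generator $\operatorname{diag}(nA)$ is bounded, which forces $A=0$ directly --- no spectral mapping and no quasinilpotence issue arise. If you want to salvage your route, supply the entire-function argument above; otherwise the reduction to $\sigma(A)=\{0\}$ does not by itself prove the theorem.
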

\begin{proof} Since the construction of the proof in the two items is
  very similar, we concentrate on the second one.

  For the Banach space $X$ we define $\ell^2({\mathbb N};X)$ as
  \begin{equation}
    \label{eq:5}
    \ell^2({\mathbb N};X) = \{ (x_n)_{n \in {\mathbb N}} \mid x_n \in
    X, \sum_{n \in {\mathbb N}} \|x_n\|^2 < \infty\}.
  \end{equation}
  With the norm 
  \[
     \|(x_n)\| = \sqrt{ \sum_{n \in {\mathbb N}} \|x_n\|^2}
  \]
  this is a Banach space. On this
  extended Banach space we define $C_{\mathrm{ext}}(t)$, $t \in
  {\mathbb R}$ as 
  \begin{equation}
    \label{eq:6}
      C_{\mathrm{ext}}(t) (x_n) = (C(nt) x_n).
  \end{equation}
  Hence it is a diagonal operator with scaled versions of $C$ on the
  diagonal. To prove that $C_{\mathrm{ext}}$ is strongly continuous,
  we take an arbitrary $x\in \ell^2({\mathbb N};X)$ and $t \in {\mathbb R}$. Furthermore,
  we choose an $\varepsilon >0$ and a $z = (z_n)$, with only finitely
  many $z_n$ unequal to zero, such that $\|x-z\| \leq \varepsilon$. By the
  construction of $\ell^2({\mathbb N};X)$ this is always possible. Now
  we find 
  \begin{align}
\nonumber
    \limsup_{h\rightarrow 0} \| C_{\mathrm{ext}} &(t+h) x
     -C_{\mathrm{ext}} (t) x \| \\
 \nonumber   \leq& \
   \limsup_{h\rightarrow 0} \left[ \| C_{\mathrm{ext}} (t+h) x
    -C_{\mathrm{ext}} (t+h) z \| + \right.\\
\nonumber 
   &\ \| C_{\mathrm{ext}} (t+h) z
    -C_{\mathrm{ext}} (t) z \| + 
    \left.
\| C_{\mathrm{ext}} (t) z
    -C_{\mathrm{ext}} (t) x \| \right]\\
  \leq&\
\label{eq:3.3}
  3 \|x-z\| + 3 \|z-x\| + \\
\nonumber 
  &\ \limsup_{h\rightarrow 0} \| C_{\mathrm{ext}} (t+h) z
    -C_{\mathrm{ext}} (t) z \|,
  \end{align}
  since by (\ref{assertCOS}), the cosine family $C_{\mathrm{ext}}$ is
  bounded by $3$. Let $N$ be such that $z_n=0$ for $n>N$. Then
  \begin{align*}
    \limsup_{h\rightarrow 0} \| C_{\mathrm{ext}} (t+h) z
    &-C_{\mathrm{ext}} (t) z \|^2 = \\
   &\ \limsup_{h\rightarrow 0}
    \sum_{n=1}^{N} \| C (nt+nh) z_n
    -C (nt) z_n \|^2 =0,
  \end{align*}
since $C$ is a strongly continuous cosine family. Combining this with
(\ref{eq:3.3}) we find that
\[
   \limsup_{h\rightarrow 0} \| C_{\mathrm{ext}} (t+h) x
     -C_{\mathrm{ext}} (t) x \| \leq 6 \varepsilon.
\]
Since $\varepsilon$ is arbitrarily, we conclude that
$C_{\mathrm{ext}}$ is a strongly continuous cosine family on
$\ell^2({\mathbb N};X)$.

Now we estimate the distance from this cosine family to the identity on
  $\ell^2({\mathbb N};X)$ for $t \in (0,1]$.
  \begin{align*}
    \|C_{\mathrm{ext}}(t) - I \|^2 = &\ \sup_{\|(x_n)\|=1}
    \|C_{\mathrm{ext}}(t) (x_n) - (x_n)\|^2 \\
   =&\    \sup_{\|(x_n)\|=1} \sum_{n \in {\mathbb N}} \|C(nt) x_n-
   x_n\|^2\\
   \leq& \sup_{\|(x_n)\|=1} \sum_{n \in {\mathbb N}} r^2 \|x_n\|^2 = r^2,
  \end{align*}
  where we have used (\ref{assertCOS}). In particular, this implies
  that 
  \[
    \limsup_{t \rightarrow 0^+} \|C_{\mathrm{ext}}(t) - I \| <2.
  \]
  By Theorem \ref{thm:zerotwolaw}, we conclude that the infinitesimal
  generator of $C_{\mathrm{ext}}$ is bounded. Since
  $C_{\mathrm{ext}}(t)$ is a diagonal operator, it is easy to see that
  its infinitesimal generator $A_{\mathrm{ext}}$ is diagonal as well. Furthermore, the
  $n$'th diagonal element equals $n A$. Since $n$ runs to infinity,
  $A_{\mathrm{ext}}$ can only be bounded when $A=0$. This immediately
  implies that $C(t)=I$ for all $t$.
\end{proof}

\mbox{}From the above proof it is clear that if Theorem
\ref{thm:zerotwolaw} would hold for non-strongly continuous cosine
families, then the strong continuity assumption can be removed from
item 2 in the above theorem as well.

As follows from the first item, for semigroups no continuity
assumption was needed. As mentioned in the introduction, this can also
be proved using operator algebraic result going back to Wallen
\cite{Wallen}.  In the following subsection, we present some
alternative proofs, showing that they can be asked as an exercise in a
first course on semigroup theory.

\subsection{Elementary proofs for semigroups}\label{sec:SG}
We now give some elementary proofs of the following result.
\begin{Theorem}\label{thm:SG1}
Let $T$ be a strongly continuous semigroup on the Banach space $X$, and
let $A$ denote its infinitesimal generator. If
\begin{equation}\label{eq:semigroupsup}
	r:=\sup_{t\geq0}\|T(t)-I\|<1,
\end{equation}
then $T(t)=I$ for all $t\geq0.$
\end{Theorem}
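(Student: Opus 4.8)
The statement to prove is Theorem \ref{thm:SG1}: for a strongly continuous semigroup $T$ with generator $A$, the bound $r:=\sup_{t\geq0}\|T(t)-I\|<1$ forces $T(t)=I$ for all $t\geq 0$.

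\emph{Plan.} The natural strategy mirrors the proof of item (1) of Theorem \ref{tm:3.1}, namely to reduce the global hypothesis on $(0,\infty)$ to the local zero-one law by a scaling-and-diagonalisation trick. I would form the extended Banach space $\ell^2(\mathbb{N};X)$ as in (\ref{eq:5}) and define the diagonal semigroup $T_{\mathrm{ext}}(t)(x_n)=(T(nt)x_n)$, the exact analogue of (\ref{eq:6}). Strong continuity of $T_{\mathrm{ext}}$ is checked precisely as in the cosine case: approximate an arbitrary $x$ by a finitely-supported $z$ (possible by construction of $\ell^2(\mathbb{N};X)$), use that $\sup_t\|T(t)\|\leq 1+r<2$ to control the tail uniformly, and use strong continuity of the finitely many active coordinates $T(nt)z_n$. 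One then estimates, for $\|(x_n)\|=1$,
\[
   \|T_{\mathrm{ext}}(t)(x_n)-(x_n)\|^2=\sum_{n}\|T(nt)x_n-x_n\|^2\leq r^2\sum_n\|x_n\|^2=r^2,
\]
so that $\sup_{t\geq0}\|T_{\mathrm{ext}}(t)-I\|\leq r<1$, and in particular $\limsup_{t\to0^+}\|T_{\mathrm{ext}}(t)-I\|<1$.

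\emph{Applying the zero-one law.} By the zero-one law for semigroups quoted in the introduction (the inequality (\ref{eq0}) implies (\ref{eq:1a}), hence boundedness of the generator, see \cite[Remark 3.1.4]{Staffans}), the generator $A_{\mathrm{ext}}$ of $T_{\mathrm{ext}}$ is a bounded operator. Since $T_{\mathrm{ext}}$ is diagonal, its generator is diagonal with $n$-th diagonal block equal to $nA$; boundedness as $n\to\infty$ is possible only if $A=0$, and $A=0$ gives $T(t)=I$ for all $t\geq0$. This is the same endgame as in the cosine proof.

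\emph{Main obstacle and alternatives.} The only delicate point is verifying the strong continuity of $T_{\mathrm{ext}}$ carefully (the tail estimate must be uniform in $h$, which is where $\sup_t\|T(t)\|<\infty$ is used), but this is routine given the template already laid out in Theorem \ref{tm:3.1}. Since the theorem advertises \emph{elementary} and \emph{alternative} proofs, I would expect at least one proof to avoid the extension trick entirely and argue directly on $X$: for instance, using $\|T(t)-I\|<1$ to show $T(t)$ is invertible for every $t$ and that $T$ extends to a group, then exploiting that $\sup_{t\in\mathbb{R}}\|T(t)-I\|\leq r<1$ together with the group property (via $T(t)-I=(T(t/2)-I)(T(t/2)+I)$-type identities or a spectral-mapping argument on $\sigma(A)$) to force the spectrum of $A$ to be $\{0\}$ and $A$ nilpotent, hence $A=0$. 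The genuinely substantive step in any such direct approach is ruling out a nonzero nilpotent or a nonzero imaginary spectral point; the diagonalisation proof sidesteps this by outsourcing it to the already-established zero-one law, which is why I would present it first.
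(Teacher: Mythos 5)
Your argument is correct, but it is not the route the paper takes for Theorem \ref{thm:SG1}. What you propose---pass to $\ell^2({\mathbb N};X)$, set $T_{\mathrm{ext}}(t)(x_n)=(T(nt)x_n)$, verify strong continuity and $\sup_{t\ge 0}\|T_{\mathrm{ext}}(t)-I\|\le r<1$, invoke the zero-one law at the origin to get a bounded generator, and read off $A=0$ from the diagonal blocks $nA$---is exactly the paper's proof of the \emph{same statement} where it appears as item 1 of Theorem \ref{tm:3.1} (the paper only writes out the cosine case there, but declares the semigroup case analogous), and all the steps you flag as delicate (uniform tail control via $\|T(t)\|\le 1+r$, the density of finitely supported sequences, the identification of the $n$-th diagonal block of $A_{\mathrm{ext}}$ with $nA$) go through as in that template. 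The two proofs the paper actually attaches to Theorem \ref{thm:SG1} are different and more self-contained. The frequency-domain proof uses $\lambda R(\lambda,A)x_0-x_0=\lambda\int_0^\infty (T(t)x_0-x_0)e^{-\lambda t}\,dt$ to get $\|\lambda R(\lambda,A)-I\|\le r<1$, hence $\lambda R(\lambda,A)$ is boundedly invertible with $\|\lambda^{-1}(\lambda I-A)\|\le (1-r)^{-1}$ uniformly in $\lambda>0$, which forces $A=0$ upon letting $\lambda\to 0^+$. The time-domain proof uses $T(t)x-x=A\int_0^t T(s)x\,ds$ together with $\|x-t^{-1}\int_0^t T(s)x\,ds\|\le r\|x\|$ to invert $B_t=\int_0^t T(s)\,ds$ and obtain $\|A\|\le \frac{1}{t(1-r)}$ for every $t>0$. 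What each approach buys: yours outsources the real work to the zero-one law (a cited, nontrivial external result) at the cost of the extension-space machinery, whereas the paper's two proofs use only first-course semigroup facts and a Neumann-series inversion, which is precisely the ``elementary, alternative'' point being advertised in Subsection \ref{sec:SG}. Your closing sketch of a direct group/spectral argument is not developed far enough to count as a proof, but the main argument you give is complete.
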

\begin{proof}[Proof of Theorem \ref{thm:SG1}, frequency domain]
Since the $C_{0}$-semigroup is bounded by (\ref{eq:semigroupsup}), $(0,\infty)\subset\rho(A)$ and for $\lambda >0$ we have that
\begin{eqnarray*}
   \| (\lambda I - A)^{-1}x_0 - \lambda ^{-1}x_0 \| &=& \left\|  \int_0^{\infty} \left(T(t) x_0 - x_0\right) e^{-\lambda t} dt\right\| \\
   &\leq& \int_0^{\infty} \left\|T(t) x_0 - x_0\right\| e^{-\lambda t} dt \leq \frac{r}{\lambda}\|x_0\|,
\end{eqnarray*}
where we used (\ref{eq:semigroupsup}).
Thus
\begin{equation*}
  \| \lambda (\lambda I - A)^{-1} - I \| \leq r
\end{equation*}
Since $r<1$, we know that $I + \left( \lambda (\lambda I - A)^{-1} - I\right)$ is boundedly invertible, and the norm of this inverse is less or equal to $(1-r)^{-1}$. Hence
\begin{equation}
  \label{eq:9}
  \|\lambda^{-1} (\lambda I- A)\| \leq \frac{1}{1-r}.
\end{equation}
So for all $\lambda >0$ we have that $\|I- \lambda^{-1} A\| \leq \frac{1}{1-r}$. This can only hold if $A=0$.
\end{proof}

\begin{proof}[Proof of Theorem \ref{thm:SG1}, time domain]
In general it holds that 
\begin{equation}
\label{eqnew}
	T(t)x-x=A\int_{0}^{t}T(s)x\ ds,\qquad t>0,x\in X.
\end{equation}
For $t>0$ let $B_{t}$ denote the bounded operator $x\mapsto B_{t}x:=\int_{0}^{t}T(s)x ds$. 
Since for all $x\in X$, 
\begin{equation*}
\|x-t^{-1}B_{t}x\|=\frac{1}{t}\left\|\int_{0}^{t}x-T(s)x\ ds\right\| \leq \frac{1}{t}\int_{0}^{t} \| x- T(s)x\|ds\leq r \|x\|,
\end{equation*}
and $r<1$, it follows that $t^{-1}B_{t}$ is boundedly invertible for all $t>0$ and 
\begin{equation}\label{eqnew2}
\|tB_{t}^{-1}\| \leq \frac{1}{1-r} \Leftrightarrow \|B_{t}^{-1}\|\leq \frac{1}{t(1-r)}.
\end{equation}
This concludes the proof because by (\ref{eqnew}) and the assumption, $\|AB_{t}\|\leq1$,
\begin{equation}
 \|A\|\leq \|B_{t}^{-1}\| \stackrel{(\ref{eqnew2})}{\leq} \frac{1}{t(1-r)} \qquad \forall t>0,
\end{equation}
hence, $A=0$.
\end{proof}

\subsection*{Acknowledgments}
We would like to thank A.~Bobwrowksi for introducing us to the problem whether $\sup_{t\geq0}\|C(t)-I\|<2$ implies $C(t)=I$. This was the starting point of this work. 
Furthermore, we are grateful to W.~Arendt for drawing the $0-3/2$ law
in \cite[Theorem 1.1 in Three Line Proofs]{UlmerSeminare2012} to our
attention.

\end{document}